\documentclass[12pt,reqno]{amsart}
\usepackage{amsmath,amssymb,amsthm, amsfonts}
\usepackage{color}
\usepackage[breaklinks=true]{hyperref}
\usepackage{enumitem}
\usepackage{graphicx}

\theoremstyle{plain}
\newtheorem{theorem}{Theorem}[section]

\newtheorem{lemma}[theorem]{Lemma}

\newtheorem{conjecture}{Conjecture}[section]

\theoremstyle{definition}
\newtheorem{definition}[theorem]{Definition}

\newtheorem{remark}[theorem]{Remark}

\numberwithin{equation}{section}
\numberwithin{table}{section}
\allowdisplaybreaks
\newcommand{\Gegen}[1]{\tilde{C}_{#1}^{(\alpha)}} 

\newcommand{\hilbert}{\mathcal{H}}

\begin{document}
\title[Schoenberg-Delsarte thresholding]{On positive definite thresholding of correlation matrices}

\author{Sujit Sakharam Damase}
\address[S.S.~Damase]{Department of Mathematics, Indian Institute of
Science, Bangalore 560012, India;}
\email{\tt sujits@iisc.ac.in}

\author{James Eldred Pascoe}
\address[J.E.~Pascoe]{Department of Mathematics, Drexel University, Philadelphia, PA 19104, United States;}
\email{\tt jep362@drexel.edu}
\thanks{We thank J C Bose Fellowship JCB/2021/000041 of SERB (PI: Tirthankar Bhattacharyya) for supporting this research.}

\date{\today}

\keywords{positive definite functions, soft thresholding, Delsarte's estimate}

\subjclass[2020]{
    62H12, 
    43A35, 
    46E22, 
    15B48, 
    33C45 
}

\begin{abstract}
    Standard thresholding techniques for correlation matrices often destroy positive semidefiniteness. We investigate the construction of positive definite functions that vanish on specific sets $K \subseteq [-1,1)$, ensuring that the thresholded matrix remains a valid correlation matrix. We establish existence results, define a criterion for faithfulness based on the linear coefficient of the normalized Gegenbauer expansion in analogy with Delsarte's method in coding theory, and provide bounds for thresholding at single points and pairs of points.
    
    We prove that for correlation matrices of rank $n$, any soft-thresholding operator that preserves positive semidefiniteness necessarily induces a geometric collapse of the feature space, as quantified by an $\mathcal{O}(1/n)$ bound on the faithfulness constant. Such demonstrates that geometrically unbiased soft-thresholding limits the recoverable signal.
\end{abstract}
\maketitle
\section{Introduction}
    Given an observed correlation matrix $M=(m_{ij})_{ij}$ from some sample, one may want to assume small entries are actually indeed zero---that is, the underlying random variables are independent. In high-dimensional statistics, thresholding estimators are canonical tools for covariance and correlation matrix regularization, particularly when the number of features exceeds the sample size \cite{bickel2008covariance, rothman2009generalized, cai2011adaptive}. 
    
    The {\bf hard thresholding} technique is to consider $f[M] = (f(m_{ij}))_{ij}$ where $f(x) = x$ if $|x| \geq \varepsilon$ and $f(x)=0$ otherwise.
    The {\bf soft thresholding} technique considers $f[M] = (f(m_{ij}))_{ij}$ where $f$ is a continuous function vanishing on $|x|\leq \varepsilon$. 
    
    A central, notorious issue in this literature is that $f[M]$ is generally not positive semidefinite, and therefore, it cannot be a valid correlation matrix. While statistical practice often relies on post-hoc eigenvalue clipping or assuming strict structural sparsity to avoid this \cite{rothman2009generalized}, we investigate the hard algebraic limits of finding thresholding functions that intrinsically preserve positive definiteness.


    \begin{definition}\label{1.1}
    We say $f: [-1,1] \rightarrow \mathbb{R}$ is {\bf positive definite on $S^{n-1}$} if, for all $N$ and all $x_{1},\cdots, x_{N} \in S^{n-1}$, the matrix $f[M]$ is a correlation matrix, where $M= (\langle x_{i}, x_{j} \rangle)_{i,j=1}^{N}$ is the correlation matrix of the vectors $x_{i}.$
    \end{definition}

    Such maps were classified by Schoenberg in the sphere case and by Bochner in the general case of compact two-point homogeneous spaces. To properly analyze these functions in the context of probability, we utilize the \textbf{normalized Gegenbauer polynomials} $\Gegen{k}(t) = C_k^{(\alpha)}(t) / C_k^{(\alpha)}(1)$, where $\alpha = (n-2)/2$. These polynomials satisfy $\Gegen{k}(1) = 1$. Schoenberg proved that, a continuous function $f$ is positive definite on $S^{n-1}$ if and only if it admits an expansion:
    $$ f(t) = \sum_{k=0}^\infty a_k \Gegen{k}(t), \quad a_k \geq 0. $$
    If we further require $f(1)=1$ (preserving the diagonal of the correlation matrix), then $\sum_{k\geq 0} a_k = 1$. 

    { \begin{remark}\label{R1.2}
Schoenberg's result can be interpreted through the lens of correlation matrix regularization. Note that Gram matrices of vectors from the unit sphere $S^{n-1}$ are precisely the same as positive semidefinite matrices with diagonal entries $1$ -- i.e., correlation matrices. Moreover, regardless of the number of vectors $x_i$ chosen, the dimension of the sphere constrains the \textit{rank} of the correlation matrix, to at most $n$. Thus, Definition \ref{1.1} now says that a function $f : [-1,1] \to \mathbb{R}$ is positive definite on $S^{n-1}$ if and only if applying $f[-]$ entrywise to correlation matrices of any size but rank $\leq n$, yields $f(1)$ times a(nother) correlation matrix.
\end{remark}

}

    Thus, given a compact set $K\subseteq [-1,1)$ which we want to threshold to $0,$ we seek to find positive definite functions which vanish on $K.$
    We establish their existence based on methods of \cite{gafa} developed for Delsarte analysis of the measurable chromatic number of the unit distance graph on the sphere -- see Theorem \ref{existenceresult}.
    We give a criterion for evaluation inspired by Delsarte theory and based on maximizing the relative weight of the linear term in the Gegenbauer expansion, the {\bf faithfulness constant.}
    Given existence, by recent results of \cite{damase2026complete}, for finite $K$ one can choose the positive definite function to be a polynomial. In contrast to the unconstrained rank case of soft thresholding considered in \cite{guillot2012retaining} where there is a paucity of thresholders available, our fixed rank case provides plenitudes. However, such thresholders must significantly degrade the signal in the geometric sense established by Theorem \ref{degrade} and by Theorem \ref{intthm} -- the latter of which establishes that faithfulness for soft thresholders is $\mathcal{O}(1/n)$ where $n$ is the dimension of the sphere from where the data is drawn (by Remark \ref{R1.2}, this is the maximum possible rank of the correlation matrices obtainable from the data). Fixed rank naturally arises in the regime of low sample high feature data. 

    A common workaround for soft thresholding is to apply Ledoit--Wolf type shrinkage \cite{ledoit2004shrunk} and take some suitable convex combination of a hard (or piece-wise linearly soft) thresholded correlation matrix and the identity matrix to force it into the positive cone, which preserves induced sparsity, as this merely amounts to scaling the off diagonal entries of the correlation matrix. However, Schoenberg's theorem implies that for large generic rank $n$ correlation matrices, the combination asymptotically puts all the weight on the identity matrix if the thresholding function is not a positive definite function.
    
    \section{Background}

\subsection{Kernel embeddings}
The geometric interpretation of positive definite functions relies fundamentally on the construction of reproducing kernel Hilbert spaces (RKHS). 
A symmetric function $K: X \times X \to \mathbb{R}$ on a set $X$ is called a \textbf{positive definite kernel} if for any finite set of points $x_1, \dots, x_N \in X$ and real numbers $c_1, \dots, c_N$, the corresponding Gram matrix is positive semidefinite:
\[ \sum_{i=1}^N \sum_{j=1}^N c_i c_j K(x_i, x_j) \geq 0. \]

The connection between such kernels and Euclidean geometry is established by the Aronszajn Theorem, which guarantees that every valid correlation structure can be realized as a set of inner products in a latent space.

\begin{theorem}[Aronszajn's theorem \cite{paulsen2016introduction}]\label{thm:aronszajn}
    Let $K$ be a positive definite kernel on a set $X$. Then there exists a essentially unique Hilbert space $\mathcal{H}$ and a feature map $\iota: X \to \mathcal{H}$ such that the kernel perfectly recovers the inner product in $\mathcal{H}$:
    \[ \langle \iota(x), \iota(y) \rangle_{\mathcal{H}} = K(x, y) \quad \text{for all } x, y \in X. \]
\end{theorem}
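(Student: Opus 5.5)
The construction is the Moore--Aronszajn one: build $\mathcal{H}$ as the completion of the span of the kernel sections $K(\cdot,y)$, with the inner product dictated by $K$, and take $\iota(x)=K(\cdot,x)$.

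\emph{Inner product on finite combinations.} Let $\mathcal{H}_0$ be the real vector space of finite linear combinations $\sum_i c_i K(\cdot,x_i)$, viewed as functions on $X$. For $f=\sum_i c_iK(\cdot,x_i)$ and $g=\sum_j d_jK(\cdot,y_j)$ I set
\[
\langle f,g\rangle=\sum_{i,j}c_id_jK(y_j,x_i).
\]
This equals $\sum_j d_j f(y_j)$, and also $\sum_i c_i g(x_i)$. Each of these expressions depends only on one of the two functions, so the pairing is independent of the chosen representations and is well defined. It is bilinear and symmetric because $K$ is symmetric, and $\langle f,f\rangle\ge 0$ is exactly the positive semidefiniteness hypothesis.

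\emph{Reproducing property and definiteness.} From the formula, $\langle f,K(\cdot,x)\rangle=f(x)$ for every $f\in\mathcal{H}_0$. The Cauchy--Schwarz inequality holds for any positive semidefinite form, so it gives
\[
|f(x)|\le \|f\|\,K(x,x)^{1/2}.
\]
Hence $\langle f,f\rangle=0$ forces $f\equiv 0$ as a function, and $\mathcal{H}_0$ is a genuine pre-Hilbert space of functions.

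\emph{Completion.} This is the main obstacle. I will show that the abstract completion can be realized as a space of functions on $X$. Let $(f_n)$ be Cauchy in $\mathcal{H}_0$. The bound above shows $f_n\to f$ pointwise. Define $\mathcal{H}$ as the set of such pointwise limits, with norm $\|f\|=\lim\|f_n\|$. The key lemma is this: if $(f_n)$ is Cauchy and $f_n\to0$ pointwise, then $\|f_n\|\to0$. To prove it, fix $m$ with $\|f_n-f_m\|<\varepsilon$ for all $n\ge m$, and write
\[
\|f_n\|^2=\langle f_n-f_m,f_n\rangle+\langle f_m,f_n\rangle.
\]
The first term is at most $\varepsilon\sup_k\|f_k\|$. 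Since $f_m$ is a finite combination of kernel sections, the reproducing property turns $\langle f_m,f_n\rangle$ into a finite combination of values $f_n(x_i)$, which tends to $0$. With this lemma, the norm on $\mathcal{H}$ is well defined and definite. The reproducing property extends by continuity, so setting $\iota(x)=K(\cdot,x)$ gives $\langle\iota(x),\iota(y)\rangle_{\mathcal{H}}=K(x,y)$.

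\emph{Essential uniqueness.} Suppose $(\mathcal{H}',\iota')$ also satisfies $\langle\iota'(x),\iota'(y)\rangle=K(x,y)$ and $\iota'(X)$ spans a dense subspace (minimality). Define $U$ on the span of the sections by $U\iota(x)=\iota'(x)$, extended linearly. Equality of the Gram data gives $\|\sum c_i\iota(x_i)\|=\|\sum c_i\iota'(x_i)\|$, so $U$ is well defined and isometric. It therefore extends to a unitary $\mathcal{H}\to\mathcal{H}'$ intertwining the two feature maps, which is the sense in which $\mathcal{H}$ is essentially unique.
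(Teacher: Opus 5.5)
Your proposal is correct, and it is the standard Moore--Aronszajn construction: the span of kernel sections, well-definedness via the reproducing identity, and the lemma that a Cauchy sequence converging pointwise to $0$ converges to $0$ in norm. This is the argument in the reference the paper cites; the paper itself gives no proof beyond that citation. For the statement as phrased, realizing the completion as a space of functions is more than needed, since the abstract completion of $\mathcal{H}_0$ already supplies $\mathcal{H}$ and $\iota$, and your added minimality hypothesis ($\iota'(X)$ spanning a dense subspace) is the right way to make ``essentially unique'' precise.
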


In the context of correlation matrices, the index set $X$ is the unit sphere $S^{n-1}$. A continuous function $f: [-1, 1] \to \mathbb{R}$ is said to be positive definite on $S^{n-1}$ exactly when the composition $K(x, y) = f(\langle x, y \rangle)$ forms a positive definite kernel. 

By Theorem \ref{thm:aronszajn}, applying such a function $f$ to a correlation matrix is geometrically equivalent to embedding the original variables into a new Hilbert space via the map $\iota: S^{n-1} \to \mathcal{H}$. The constraints on the thresholded matrix are thus completely determined by the geometry of this embedding.

\subsection{Spherical Harmonics and Gegenbauer Polynomials}
For a comprehensive treatment of spherical harmonics and orthogonal polynomials, we refer the reader to the great classic \cite{szego}. Let $S^{n-1} \subset \mathbb{R}^n$ denote the unit sphere equipped with the rotationally invariant probability measure $\sigma$. The space $L^2(S^{n-1}, \sigma)$ decomposes into a direct sum of mutually orthogonal subspaces $\mathcal{H}_k$, where $\mathcal{H}_k$ consists of the restriction of harmonic homogeneous polynomials of { total} degree $k$ to the sphere:
\[ L^2(S^{n-1}, \sigma) = \bigoplus_{k=0}^\infty \mathcal{H}_k. \]
The dimension of $\mathcal{H}_k$, denoted $d_k(n)$, is given by:
\[ d_k(n) = \binom{n+k-1}{k} - \binom{n+k-3}{k-2}. \]

The \textbf{Gegenbauer polynomials} arise as the zonal spherical functions associated with this decomposition. In classical literature, these are typically denoted $C_k^{(\alpha)}(t)$, where the parameter is defined as $\alpha = (n-2)/2$. 

One can also see Gegenbauer polynomials as the family of polynomials which are orthogonal with respect to the measure $(1-t^{2})^{\alpha - 1/2}$, this measure arises naturally (up to scaling) as the orthogonal projection of the surface measure from $S^{n-1}$ onto a coordinate axis.

For mathematical clarity when invoking classical identities, we will use $\alpha = (n-2)/2$ to denote the classical parameter, and denote our \textbf{normalized Gegenbauer polynomials} by $\Gegen{k}(t)$, which are scaled such that $\Gegen{k}(1) = 1$.
Explicitly, if $C_k^{(\alpha)}(t)$ are the standard orthogonal polynomials defined by the generating function $(1-2rt+r^2)^{-\alpha} = \sum C_k^{(\alpha)}(t) r^k$, then:
\[ \Gegen{k}(t) = \frac{C_k^{(\alpha)}(t)}{C_k^{(\alpha)}(1)}, \quad \text{where } C_k^{(\alpha)}(1) = \binom{k+2\alpha-1}{k}. \]
These polynomials satisfy the strict bound $|\Gegen{k}(t)| \leq 1$ for all $t \in [-1, 1]$.

\subsection{Darboux estimates}
Darboux's estimate \cite{szego} is useful for developing intuition for Gegenbauer polynomials. Applying the normalization $C_k^{(\alpha)}(1) \sim k^{2\alpha-1}/\Gamma(2\alpha)$ to the standard Darboux expansion, we evaluate the polynomial at $t = \cos \theta$ for $\theta \in (0, \pi)$. As $k \to \infty$:
$$ \Gegen{k}(\cos \theta) = \frac{2^{1-\alpha} \Gamma(2\alpha)}{\Gamma(\alpha)} k^{-\alpha} (\sin \theta)^{-\alpha} \cos\left( (k+\alpha)\theta - \frac{\alpha \pi}{2} \right) + \mathcal{O}\left(k^{-\alpha-1}\right) $$
where $\alpha = \frac{n-2}{2}$. Away from the poles $t = \pm 1$, the normalized polynomials decay uniformly bounded with rate $\mathcal{O}(k^{\frac{2-n}{2}})$. 

\subsection{Derivative Estimates}
To analyze thresholding on intervals (Theorem \ref{intthm}), we require estimates on the derivatives of $\Gegen{k}(t)$ at the origin. 
\begin{lemma}\label{lem:derivative}
    Let $K \subseteq (-1,1)$ compact.
    Then, 
    $\sup_{t\in K}|(\Gegen{k})'(t)|$ is $\mathcal{O}(k^{\frac{4-n}{2}})$.
\end{lemma}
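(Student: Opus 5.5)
The plan is to reduce the derivative to a normalized Gegenbauer polynomial with a shifted parameter, and then apply the Darboux estimate from the previous subsection to that polynomial. The classical identity
\[ \frac{d}{dt} C_k^{(\alpha)}(t) = 2\alpha\, C_{k-1}^{(\alpha+1)}(t) \]
is the starting point. Dividing by $C_k^{(\alpha)}(1)$ and rewriting in terms of normalized polynomials gives
\[ (\Gegen{k})'(t) = 2\alpha\,\frac{C_{k-1}^{(\alpha+1)}(1)}{C_k^{(\alpha)}(1)}\; \frac{C_{k-1}^{(\alpha+1)}(t)}{C_{k-1}^{(\alpha+1)}(1)} . \]
The argument then splits into two factors: the ratio of normalizing constants, and the normalized Gegenbauer polynomial of parameter $\alpha+1$ and degree $k-1$. (The case $\alpha = 0$, i.e.\ $n=2$, is handled separately in the last step.)

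\textbf{Ratio of normalizing constants.} Using $C_k^{(\alpha)}(1) = \binom{k+2\alpha-1}{k} \sim k^{2\alpha-1}/\Gamma(2\alpha)$, we get $C_{k-1}^{(\alpha+1)}(1) \sim k^{2\alpha+1}/\Gamma(2\alpha+2)$. Hence the ratio is asymptotically of order $k^{2}$. Equivalently, $(\Gegen{k})'(1) = k(k+2\alpha)/(2\alpha+1)$, which is the familiar value.

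\textbf{Uniform Darboux bound.} Next I apply the Darboux estimate with parameter $\alpha+1$ in place of $\alpha$. This is legitimate because the estimate holds for every $\alpha>0$; it corresponds to the sphere $S^{n+1}$. Since $K \subseteq (-1,1)$ is compact, $K = \cos(\Theta)$ for a compact $\Theta \subseteq (0,\pi)$, so $\sin\theta$ is bounded below by a positive constant on $\Theta$. The estimate then gives
\[ \sup_{t\in K}\Bigl|\tilde C^{(\alpha+1)}_{k-1}(t)\Bigr| = \mathcal{O}\bigl(k^{-(\alpha+1)}\bigr). \]
Multiplying by the $k^2$ factor yields $\mathcal{O}(k^{1-\alpha}) = \mathcal{O}\bigl(k^{\frac{4-n}{2}}\bigr)$, as claimed.

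\textbf{Main obstacle: uniformity in $\theta$.} The Darboux expansion as written is pointwise in $\theta$, but we need a supremum over $K$, so the error term $\mathcal{O}(k^{-\alpha-2})$ must be uniform for $\theta$ in compact subsets of $(0,\pi)$. I would cite Szeg\H{o}'s Theorem 8.21.8 (or the corresponding Gegenbauer statement), which gives this uniformity on $[\varepsilon,\pi-\varepsilon]$. Alternatively, one can use the non-asymptotic inequality
\[ (\sin\theta)^{\lambda}\,\bigl|C_m^{(\lambda)}(\cos\theta)\bigr| \le \frac{2^{1-\lambda}}{\Gamma(\lambda)}\, m^{\lambda-1}, \qquad 0<\lambda<1, \]
or its analogue for general $\lambda$ obtained by induction via the derivative identity. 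For $n=2$ the Gegenbauer polynomials are Chebyshev polynomials, $\Gegen{k}(\cos\theta) = \cos k\theta$, so
\[ (\Gegen{k})'(\cos\theta) = \frac{k\sin k\theta}{\sin\theta} = \mathcal{O}(k) \]
uniformly on $\Theta$, which again matches $k^{\frac{4-n}{2}}$.
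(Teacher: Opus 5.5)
Your proposal is correct and follows the paper's route: the identity $\frac{d}{dt}C_k^{(\alpha)} = 2\alpha\, C_{k-1}^{(\alpha+1)}$, the normalizing factor $k(k+n-2)/(n-1)$ of order $k^2$, and the Darboux bound $\mathcal{O}(k^{-(\alpha+1)})$ for the degree-$(k-1)$ normalized polynomial with parameter $\alpha+1$. Your remarks on the uniformity of the Darboux error over compact $\theta$-ranges, and your separate Chebyshev treatment of $n=2$, fill in details the paper leaves implicit.
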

\begin{proof}
    The classical differentiation identity for Gegenbauer polynomials is $\frac{d}{dt} C_k^{(\alpha)}(t) = 2\alpha C_{k-1}^{(\alpha+1)}(t)$ \cite{szego}. Normalizing by $C_k^{(\alpha)}(1)$ and applying the classical dimension parameter $\alpha = \frac{n-2}{2}$ yields:
    \[ \frac{d}{dt} \Gegen{k}(t) = \frac{k(k+n-2)}{n-1} \frac{C_{k-1}^{(\alpha+1)}(t)}{C_{k-1}^{(\alpha+1)}(1)}, \]
    which is $\mathcal{O}(k^{\frac{4-n}{2}})$ by Darboux's estimate.
\end{proof}
\subsection{Three-term recurrence}
To extract structural bounds on the coefficients of thresholding functions, we require the explicit three-term recurrence relation for the normalized Gegenbauer polynomials. 

Unlike the standard orthogonal polynomials, the normalization condition $\Gegen{k}(1) = 1$ forces an asymmetric transition in the degrees. For $k \geq 0$, the recurrence is given by:
$$t \Gegen{k}(t) = c_k \Gegen{k+1}(t) + b_k \Gegen{k-1}(t),$$
where the forward and backward transition weights are given by:
$$c_k = \frac{k+n-2}{2k+n-2}, \quad \text{and} \quad b_k = \frac{k}{2k+n-2}.$$
For $k=0$, we adopt the convention $b_0 = 0$ and $\Gegen{-1}(t) = 0$, which correctly yields $c_0 = 1$. Notice that for all $k$, $c_k + b_k = 1$, preserving the property that the polynomials evaluate to $1$ at the pole $t=1$.
\section{Delsarte's method and faithfulness}
    A {\bf spherical code} $\mathcal{C}$ is a finite subset of the unit sphere $S^{n-1}$. The minimum angle of the code is the smallest angular distance between any distinct pair of points in $\mathcal{C}$. Delsarte's linear programming method \cite{Delsarte} provides an upper bound on the cardinality of such codes using positive definite functions.

    \begin{theorem}[Delsarte's Estimate]
        Let $\theta\in (0, \pi)$ and $f: [-1,1] \to \mathbb{R}$ be a positive definite function on $S^{n-1}$ such that:
        \begin{itemize}
            \item $f(t) \leq 0$ for all $t \in [-1, \cos\theta]$;
            \item The constant coefficient $a_0$ in the Gegenbauer expansion is positive.
        \end{itemize}
        Then, for any spherical code $\mathcal{C}$ with minimum angle $\theta$,
        \[
        |\mathcal{C}| \leq \frac{f(1)}{a_0}.
        \]
    \end{theorem}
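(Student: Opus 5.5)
The plan is to double count the quantity
\[
S=\sum_{x,y\in\mathcal{C}} f(\langle x,y\rangle)
\]
and compare an upper bound coming from the sign condition with a lower bound coming from positive definiteness.

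For the upper bound, split $S$ into diagonal and off-diagonal pairs. The diagonal pairs $x=y$ contribute $|\mathcal{C}|\,f(1)$. For distinct $x,y\in\mathcal{C}$, the minimum-angle hypothesis gives $\langle x,y\rangle\le\cos\theta$, so $f(\langle x,y\rangle)\le 0$ by the first bullet. Hence
\[
S\le |\mathcal{C}|\,f(1).
\]

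For the lower bound, use Schoenberg's expansion $f=\sum_k a_k\Gegen{k}$ with all $a_k\ge 0$. I will assume the expansion converges absolutely and uniformly. This holds because $\sum_k a_k=f(1)<\infty$ and $|\Gegen{k}|\le 1$, which also justifies swapping the finite sum with the series. Then
\[
S=\sum_k a_k \sum_{x,y}\Gegen{k}(\langle x,y\rangle).
\]
By the addition theorem, $\Gegen{k}(\langle x,y\rangle)=\frac{1}{d_k(n)}\sum_{j} Y_{k,j}(x)Y_{k,j}(y)$ for an orthonormal basis $\{Y_{k,j}\}$ of $\mathcal{H}_k$. Equivalently, each $\Gegen{k}$ is itself positive definite, which is Schoenberg's theorem applied to $f=\Gegen{k}$. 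Either way, every inner double sum $\sum_{x,y}\Gegen{k}(\langle x,y\rangle)$ is a quadratic form of a PSD matrix evaluated at the all-ones vector, and so is nonnegative. For $k=0$ we have $\Gegen{0}\equiv 1$, so that term equals $|\mathcal{C}|^2$. Discarding the $k\ge 1$ terms, which are nonnegative, gives
\[
S\ge a_0|\mathcal{C}|^2.
\]

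Combining the two bounds yields $a_0|\mathcal{C}|^2\le |\mathcal{C}|f(1)$. Dividing by $a_0|\mathcal{C}|>0$, using the second bullet, gives $|\mathcal{C}|\le f(1)/a_0$.

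The only delicate point is justifying that each $\Gegen{k}$ is positive definite, since that is what makes the $k\ge1$ terms droppable. I would cite the addition formula or the easy direction of Schoenberg's theorem. Everything else is routine.
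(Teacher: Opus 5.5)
Your proof is correct and is essentially the paper's argument: with $V=\sum_{x\in\mathcal{C}}\iota(x)$ one has $S=\|V\|^2$, so your upper bound $S\le|\mathcal{C}|f(1)$ is identical to the paper's. Your lower bound $S\ge a_0|\mathcal{C}|^2$, obtained by discarding the nonnegative $k\ge 1$ Gegenbauer sums, is the coordinate form of the paper's operator-norm step $|\mathcal{C}|^2=|T_0V|^2\le\|T_0\|^2\|V\|^2=\|V\|^2/a_0$ (Pythagoras in $\hilbert=\bigoplus_k\hilbert_k$), so the paper simply repackages this classical Gram-matrix computation in RKHS language.
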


    The following proof of Delsarte's estimate gives a roadmap for our investigation of soft thresholding.
    \begin{proof}
    The positive definite function $f$ defines a kernel embedding $\iota: S^{n-1} \to \hilbert$ such that $\langle \iota(x), \iota(y) \rangle_{\hilbert} = f(\langle x, y \rangle)$. 
    We decompose the Hilbert space into orthogonal subspaces of spherical harmonics, $\hilbert = \bigoplus_{k=0}^\infty \hilbert_k$.
    { That is, we view the embedding $\iota$ as a weighted direct sum of the embeddings $\iota_k$ corresponding to the kernels $\Gegen{k}(\langle x,y \rangle).$}
    
    Let $T_0 : \hilbert \to \mathbb{R}$ be the linear operator that extracts the constant component, defined by the action $T_0 \iota(x) = 1$ for any unit vector $x \in S^{n-1}$. 
    The squared norm of the component of $\iota(x)$ in $\hilbert_0$ is given by $a_0 \Gegen{0}(\langle x, x \rangle) = a_0$. Since $T_0$ maps a vector of norm $\sqrt{a_0}$ to the scalar $1$, the operator norm of $T_0$ is precisely:
    \[ \|T_0\| = \frac{1}{\sqrt{a_0}}. \]
    
    Now, let $\mathcal{C} \subset S^{n-1}$ be a spherical code with minimum angle $\theta$. Consider the sum of the embedded points:
    \[ V = \sum_{x \in \mathcal{C}} \iota(x) \in \hilbert. \]
    Applying the operator $T_0$ to this sum yields $T_0(V) = \sum_{x \in \mathcal{C}} 1 = |\mathcal{C}|$. { Squaring, we obtain:
    \begin{equation}\label{Elabel}
        |\mathcal{C}|^2 = |T_0(V)|^{2} \leq  \|T_0\| \|V\| = \frac{1}{a_0} \|V\|^2.
    \end{equation}
    }
    
    We evaluate the squared norm of $V$ directly using the kernel property:
    \[ \|V\|^2 = \sum_{x, y \in \mathcal{C}} \langle \iota(x), \iota(y) \rangle_{\hilbert} = \sum_{x, y \in \mathcal{C}} f(\langle x, y \rangle) = |\mathcal{C}| f(1) + \sum_{x \neq y} f(\langle x, y \rangle). \]
    Since the minimum angle of the code is $\theta$ and $f(t) \leq 0$ for all $t \in [-1, \cos\theta]$, the entire off-diagonal sum is non-positive. This bounds the total energy of the configuration:
    \[ \|V\|^2 \leq |\mathcal{C}| f(1). \]
    
   { Combining this with \eqref{Elabel}} yields the desired bound, $|\mathcal{C}| \leq f(1)/a_0$.
\end{proof}

\begin{remark}
    Note that, the constant coefficient $a_{0}$ of $f$ is the expectation of $f(\langle x ,y\rangle)$ with $x$ and $y$ chosen independently and uniformly at random from $S^{n-1}$.
\end{remark}

\begin{remark}
The proof of Delsarte's estimate is classically presented via algebraic inequalities on the entries of a positive semidefinite Gram matrix. However, the RKHS formulation presented here exposes a deeper structural mechanics native to operator algebra and probability.

By forcing the off-diagonal inner products to be non-positive, we geometrically enforce a prior of mutual independence (or active repulsion) among the embedded random variables. In this framework, the operator $T_0$ does not merely extract a constant; it acts precisely as the canonical \textbf{conditional expectation} projecting the total system down onto the trivial spherical harmonic state (the constant subalgebra). 
 We optimize to find the best unbiased embedding with the largest conditional expectation to the constant function.
In this light, Delsarte's method transcends geometric sphere-packing; it is a strict operator-norm capacity bound on the variance that can survive a conditional expectation under a prior of independence.
\end{remark}
    
    In the classical Delsarte usage (and its adaptation to sphere packing by Cohn and Elkies \cite{cohn-elkies}, and further by Viazovska \cite{viazovska2017sphere,cohn2017sphere}, which uses the analogous theory for $\mathbb{R}^n$), the goal is to maximize the ratio of the total mass $f(1)$ to the constant term $a_0$ (triviality) to obtain the tightest possible bound on code size.

    \begin{remark}
    In our context of thresholding, we invert this perspective. We are not bounding a code size; rather, we are constructing a kernel. We seek to maximize the \textbf{linear coefficient} $a_1$ (which we term faithfulness) subject to the constraint that $f$ vanishes on the thresholding set $K$. Just as Delsarte maximizes triviality to bound density, we maximize faithfulness to minimize information loss.
    Concretely, we have
        \begin{equation}
            a_1\langle x,y\rangle-(1-a_1)\leq  \langle\iota(x),\iota(y) \rangle\leq a_1\langle x,y\rangle+1-a_1.
        \end{equation}
    \end{remark}

\section{Existence and structural bounds}
    We now establish the existence of the desired thresholding functions.
    \begin{theorem}\label{existenceresult}
        Let $K\subseteq [-1,1)$ be compact. There exists a non-zero positive definite function $f$ on $S^{n-1}$ vanishing on $K.$
    \end{theorem}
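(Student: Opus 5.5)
The plan is to build $f$ explicitly as the overlap volume of two spherical caps. Such a function is automatically positive definite, because it is a Gram kernel of indicator functions in $L^2(S^{n-1},\sigma)$. The compactness of $K\subseteq[-1,1)$ is used only to separate $K$ from the pole $t=1$.

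First, since $K$ is compact and contained in $[-1,1)$, it has a maximum $\max K = \cos\theta_0$ for some $\theta_0\in(0,\pi]$. Fix a radius $r$ with $0<2r<\theta_0$. For $x\in S^{n-1}$, let $C_x=\{z\in S^{n-1}:\langle z,x\rangle\geq \cos r\}$ be the closed cap of angular radius $r$ centred at $x$. Define the kernel
\[ F(x,y)=\sigma(C_x\cap C_y)=\langle \mathbf 1_{C_x},\mathbf 1_{C_y}\rangle_{L^2(S^{n-1},\sigma)}. \]

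Second, I check the three required properties in turn.

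\emph{Zonality.} For any orthogonal $R$ we have $C_{Rx}=R\,C_x$, and $\sigma$ is rotation invariant. Hence $F(Rx,Ry)=F(x,y)$. Since $O(n)$ acts transitively on pairs of unit vectors with a prescribed inner product, $F(x,y)=f(\langle x,y\rangle)$ for a well-defined function $f$ on $[-1,1]$.

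\emph{Positive definiteness.} $F$ is a Gram kernel of vectors $\mathbf 1_{C_x}$ in a Hilbert space. Hence $f[M]$ is positive semidefinite for every Gram matrix $M$ of points of $S^{n-1}$. Its diagonal is the constant $f(1)=\sigma(C_x)>0$, so after dividing by $f(1)$ it is a correlation matrix, and $f$ is positive definite on $S^{n-1}$ in the sense of Definition \ref{1.1}. In particular $f$ is non-zero.

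\emph{Vanishing on $K$.} Suppose $t=\langle x,y\rangle\leq\cos\theta_0$. Then the angle between $x$ and $y$ is at least $\theta_0>2r$. By the triangle inequality for the geodesic metric, no point $z$ lies within angle $r$ of both $x$ and $y$. So $C_x\cap C_y=\emptyset$ and $f(t)=0$. Thus $f$ vanishes on all of $[-1,\cos\theta_0]\supseteq K$.

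Finally, I verify continuity of $f$, which is needed so that Schoenberg's expansion applies and the later Gegenbauer analysis makes sense. We have $|F(x,y)-F(x,y')|\leq \sigma(C_y\triangle C_{y'})$, and this tends to $0$ as $y'\to y$ because cap boundaries have measure zero. Hence $f$ is continuous. I expect this routine measure-theoretic point to be the only mildly delicate step; the rest is immediate. The construction in fact gives more than the statement requires: one positive definite $f$ that vanishes on the whole interval $[-1,\cos\theta_0]$, which is the picture behind the methods of \cite{gafa}.
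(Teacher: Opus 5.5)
Your proposal is correct and is essentially the paper's proof. The paper's kernel $\int_{O(n)} 1_C(Ux)\,1_C(Uy)\,d\mu(U)$ is exactly your $\sigma(C_x\cap C_y)$, since $1_C(Ux)=1$ iff $U^{-1}e_1\in C_x$ and $U^{-1}e_1$ is $\sigma$-distributed; it uses the same choice of $r$ with $\max K<\cos(2r)$, which forces the caps to be disjoint, and your explicit checks of zonality and continuity fill in details the paper leaves implicit.
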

    \begin{proof}
        We construct such a function by symmetrizing the indicator of a spherical cap. Let $C \subset S^{n-1}$ be a spherical cap centered at the north pole $e_1$ with angular radius $\theta$, such that its closure is disjoint from the region of thresholding.
        Specifically, since $K \subseteq [-1, 1)$ is compact, there exists some $r > 0$ such that $\sup K < \cos(2r)$. Let $1_C$ be the indicator function of a spherical cap of radius $r$. Consider the autocorrelation kernel:
        $$ K(x, y) = \int_{O(n)} 1_C(U x) 1_C(U y) \, d\mu(U) $$
        where $\mu$ is the Haar measure on the orthogonal group $O(n)$. This kernel is positive definite by construction. The value $K(x,y)$ is proportional to the volume of the intersection of two caps of radius $r$ separated by the angle between $x$ and $y$.
        This intersection volume is non-zero if and only if the { spherical} distance between $x$ and $y$ is less than $2r$. In terms of inner products, the support of the function $f(t)$ corresponding to this kernel is $[\cos(2r), 1]$. Since $\sup K < \cos(2r)$, the function $f$ vanishes identically on $K$.
    \end{proof}
    {
    \begin{remark}
        We note that one may obtain a function vanishing on exactly $K,$ if desired. Specifically, taking the kernel arising from the union of two small caps with some angle $\theta$ gives us a function supported near $0$ and $\cos \theta.$ One might be compelled to find such a function to avoid thresholding your function to $0$ away from $K.$ However, as we are perturbing the rest of the values, the qualitative improvement may be minimal.
    \end{remark}}

    Given the existence of thresholding functions, a natural question is how to keep score. A natural measure is the weight of the linear coefficient in the Gegenbauer expansion. We { define} the {\bf faithfulness constant} for a compact set $K$ in the $n$-dimensional sphere, to be the maximum possible linear coefficient of a unital continuous positive definite function on $S^{n-1}$ vanishing on $K$; { and we denote it by} $\tau_{K,n}.$
    \begin{theorem}\label{degrade}
        Let $f$ be a positive definite function on $S^{n-1}$ with normalized Gegenbauer coefficients $a_k$.
        Let $\iota: S^{n-1} \rightarrow \mathcal{H}$ be the corresponding kernel embedding.
        The norm of the linear map $T_1: \mathcal{H} \to \mathbb{R}^n$ such that $T_{1}\iota(x) = x$ is equal to $1/\sqrt{a_1}$.
    \end{theorem}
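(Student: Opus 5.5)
We follow the same route as the proof of Delsarte's estimate, with $T_0$ replaced by $T_1$ and $\hilbert_0$ by $\hilbert_1$. Expanding $f(t)=\sum_k a_k \Gegen{k}(t)$, realize $\hilbert$ concretely as the closed span of $\{\iota(x)\}$ inside the orthogonal direct sum $\bigoplus_k \sqrt{a_k}\,\mathcal{E}_k$, where $\mathcal{E}_k$ is the RKHS of the kernel $\Gegen{k}(\langle x,y\rangle)$ with feature map $\iota_k$. Then $\iota(x)=\bigoplus_k \sqrt{a_k}\,\iota_k(x)$. For $k=1$ we have $\Gegen{1}(t)=t$, so $\mathcal{E}_1$ is isometrically $\mathbb{R}^n$ with $\iota_1(x)=x$. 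Indeed, $\langle \iota_1(x),\iota_1(y)\rangle=\langle x,y\rangle$ and the points of $S^{n-1}$ span $\mathbb{R}^n$. We assume $a_1>0$; otherwise $T_1$ is not well defined or is unbounded, and we read the norm as $\infty$.

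First we show $T_1$ is well defined and identify it. Let $P_1$ be the orthogonal projection of $\hilbert$ onto the degree-one summand. The candidate is $T_1=a_1^{-1/2}P_1$, since $P_1\iota(x)=\sqrt{a_1}\,x$ and hence $T_1\iota(x)=x$. Because the $\iota(x)$ span a dense subspace, any bounded linear map with $T_1\iota(x)=x$ agrees with this one. Linearity on finite combinations also follows directly: if $\sum c_i\iota(x_i)=0$, then its $\mathcal{E}_1$-component $\sqrt{a_1}\sum c_i x_i$ vanishes, so $\sum c_i x_i=0$.

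Next we bound the norm from above. Since $\|P_1\|\le 1$, we get $\|T_1\|\le a_1^{-1/2}$. Alternatively, one can check that the Gram matrix of $\{\iota(x_i)\}$ dominates $a_1$ times that of $\{x_i\}$, because the remaining terms $a_k\Gegen{k}(\langle x_i,x_j\rangle)$ are positive semidefinite by Schoenberg. This gives $\|\sum c_i x_i\|^2\le a_1^{-1}\|\sum c_i\iota(x_i)\|^2$.

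For the lower bound, the step needing most care, we exhibit a vector in $\hilbert$ on which the bound is attained. The natural candidate is the degree-one component itself, $v_x=\int_{S^{n-1}}\langle z,x\rangle\,\iota(z)\,d\sigma(z)$, which lies in $\hilbert$ as a Bochner integral. By Funk--Hecke orthogonality, the components of $v_x$ in $\mathcal{E}_k$ vanish for $k\neq1$, since $\langle z,x\rangle$ is a degree-one harmonic. The degree-one component equals $\sqrt{a_1}\int\langle z,x\rangle z\,d\sigma(z)=\sqrt{a_1}\,x/n$. Hence $\|v_x\|=\sqrt{a_1}/n$ and $T_1 v_x=x/n$, so the ratio $\|T_1v_x\|/\|v_x\|$ is exactly $1/\sqrt{a_1}$. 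Together with the upper bound, this gives $\|T_1\|=1/\sqrt{a_1}$.
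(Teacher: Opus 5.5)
Your proof is correct and takes essentially the paper's route: decompose the kernel Hilbert space orthogonally into the weighted spherical-harmonic summands, identify $T_1$ as $a_1^{-1/2}$ times the orthogonal projection onto the degree-one summand, and read off the norm. You also supply details the paper leaves implicit: well-definedness of $T_1$, the degenerate case $a_1=0$, and, most usefully, the explicit vector $v_x=\int\langle z,x\rangle\,\iota(z)\,d\sigma(z)$, which via Funk--Hecke shows that the degree-one summand actually lies in $\hilbert$, so the bound $1/\sqrt{a_1}$ is attained.
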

    \begin{proof}
        Write the space of spherical harmonics as a direct sum of orthogonal subspaces $\mathcal{H}_k$, each corresponding to the span of spherical harmonics of { total} degree $k$, weighted by $a_k$:
        $$ \mathcal{H}_f = \bigoplus_{k=0}^\infty \mathcal{H}_k.$$
        The embedding $\iota(x)$ can be written as $\sum_{k=0}^\infty v_k(x)$, where $v_k(x) \in \mathcal{H}_k$.
        Due to the orthogonality of the Gegenbauer expansion (and consequently the spherical harmonics), the linear subspace $\mathcal{H}_1$ is orthogonal to all $\mathcal{H}_k$ for $k \neq 1$. Thus, the norm of the recovery map $T_1$
        is exactly $1/\sqrt{a_1}.$
    \end{proof}

    Given the constraint that the optimal thresholding function must maximize this linear coefficient, we can establish a strict structural ceiling on its behavior. The linear coefficient for an optimizer has certain size constraints, for example $a_1\geq a_0.$ By viewing the coefficients as a probability measure on the natural numbers, we can extract this bound directly from the three-term recurrence of the orthogonal polynomials.
    
   \begin{theorem}\label{lem:difference_bound}
        Let $f(t) = \sum_{k=0}^\infty a_k \Gegen{k}(t)$ be a continuous positive definite function on $S^{n-1}$ with $f(1) = 1$ that maximizes the faithfulness constant $a_1$ subject to vanishing on a compact set $K \subset [-1, 1)$. Then the coefficients of $f$ satisfy the second-order difference inequality:
        \[ c_{k-1} a_{k-1} + b_{k+1} a_{k+1} \leq a_1 \frac{d_k}{n} \quad \text{for all } k \geq 0, \]
        where $a_{-1} = c_{-1} = 0$, $d_k$ is the dimension of the space of spherical harmonics of degree $k$, and $c_k, b_k$ are the transition weights from the three-term recurrence $t \Gegen{k}(t) = c_k \Gegen{k+1}(t) + b_k \Gegen{k-1}(t)$.
    \end{theorem}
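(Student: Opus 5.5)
The plan is to reinterpret the left-hand side as a Gegenbauer coefficient of $t f(t)$, and then compare $f$ with a competitor built from $f$ by a Schur product.

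First I would identify the left-hand side. Multiplying the expansion of $f$ by $t$ and applying the three-term recurrence $t\Gegen{j}(t) = c_j\Gegen{j+1}(t) + b_j\Gegen{j-1}(t)$ termwise gives
\[
t f(t) = \sum_{j\ge 0} a_j\bigl(c_j \Gegen{j+1}(t) + b_j \Gegen{j-1}(t)\bigr),
\]
so the coefficient of $\Gegen{k}$ in $t f(t)$ is exactly $c_{k-1}a_{k-1} + b_{k+1}a_{k+1}$, using the conventions $a_{-1}=c_{-1}=0$. The rearrangement is justified because $\sum a_j = f(1) = 1 < \infty$ and $|\Gegen{j}|\le 1$, so the series converges absolutely and uniformly.

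Next I would rewrite this coefficient as an integral. Let $\mu$ be the normalized projection of $\sigma$ onto an axis, i.e.\ the probability measure proportional to $(1-t^2)^{\alpha-1/2}\,dt$. Since $\Gegen{k}(\langle x,y\rangle)$ is the normalized reproducing kernel of $\mathcal{H}_k$, we have $\int \Gegen{k}^2\, d\mu = 1/d_k$. In particular $\int t^2\, d\mu = 1/n$, because $\Gegen{1}(t) = t$ and $d_1 = n$. Orthogonality then gives
\[
c_{k-1}a_{k-1} + b_{k+1}a_{k+1} = d_k \int t\, f(t)\,\Gegen{k}(t)\, d\mu(t).
\]

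The key step is to introduce the competitor $h(t) = f(t)\,\Gegen{k}(t)$. It is continuous. It is positive definite on $S^{n-1}$ by the Schur product theorem, since both $f$ and $\Gegen{k}$ induce positive semidefinite kernels, and entrywise products of such matrices remain positive semidefinite. It vanishes on $K$ because $f$ does, and $h(1) = f(1)\Gegen{k}(1) = 1$. So $h$ is admissible in the optimization defining $\tau_{K,n}$. Its linear coefficient is
\[
a_1(h) = \frac{\int t\, h\, d\mu}{\int t^2\, d\mu} = n\int t\, f(t)\,\Gegen{k}(t)\, d\mu(t) = \frac{n}{d_k}\bigl(c_{k-1}a_{k-1} + b_{k+1}a_{k+1}\bigr).
\]
Maximality of $a_1$ for $f$ gives $a_1(h)\le a_1$, which rearranges to the claimed inequality. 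For $k=0$ the competitor is just $f$ itself, and the inequality reduces to $b_1 a_1 = a_1/n \le a_1/n$, which is consistent.

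I expect the main obstacle to be bookkeeping rather than anything conceptual. One must confirm that $\int \Gegen{k}^2\,d\mu = 1/d_k$ holds under the paper's normalization; this follows from the addition theorem, $\Gegen{k}(\langle x,y\rangle) = \frac{1}{d_k}\sum_i Y_i(x)Y_i(y)$ for an orthonormal basis $\{Y_i\}$ of $\mathcal{H}_k$. One must also confirm that Schur products preserve membership in the admissible class, including the unital normalization. If needed, one can double-check the constant by testing $k=1$, where the inequality reads $a_0 + \frac{2}{n+2}a_2 \le a_1$; this is in line with the remark that an optimizer has $a_1 \ge a_0$.
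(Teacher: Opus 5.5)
Your proposal is correct and follows the paper's proof. It uses the same Schur-product competitor $h = f\,\Gegen{k}$, the same admissibility check, and the same orthogonality computation with $\|\Gegen{k}\|^2 = 1/d_k$ and $\|\Gegen{1}\|^2 = 1/n$. The only cosmetic difference is in where the recurrence is applied: you apply it to $t f(t)$ and read off the $k$-th coefficient directly, whereas the paper applies it to $t\,\Gegen{m}(t)$ and then needs the detailed-balance identity $d_m c_m = d_{m+1} b_{m+1}$ to reach the same expression.
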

    \begin{proof}
        Let $g(t) = \Gegen{m}(t)$. By Schoenberg's theorem, $g$ is positive definite. Consider the pointwise product $h(t) = f(t)g(t)$. By the Schur product theorem, $h$ is positive definite. Furthermore, since $f(t) = 0$ for all $t \in K$, $h(t)$ also vanishes on $K$. Since $h(1) = f(1)g(1) = 1$, the function $h$ is an admissible thresholding function. Because $f$ maximizes the linear coefficient among all such functions, the linear coefficient of $h$, denoted $c_1(h)$, must satisfy $c_1(h) \leq a_1$.
        
        To compute $c_1(h)$, we project $h(t)$ onto $\Gegen{1}(t) = t$ using the orthogonality of the normalized Gegenbauer polynomials. Under the rotationally invariant probability measure $\sigma$ on $S^{n-1}$, the squared $L^2$-norm of $\Gegen{k}$ is $\|\Gegen{k}\|^2 = 1/d_k$. 
        
        The linear coefficient is given by:
        \[ c_1(h) = \frac{\langle f(t)\Gegen{m}(t), t \rangle}{\| \Gegen{1} \|^2}. \]
        
        Applying the asymmetric three-term recurrence term by term in the series for $f$ and applying orthogonality:
        \begin{align*} c_1(h) &= n \left( c_m a_{m+1} \|\Gegen{m+1}\|^2 + b_m a_{m-1} \|\Gegen{m-1}\|^2 \right) \\ &= n \left( \frac{c_m a_{m+1}}{d_{m+1}} + \frac{b_m a_{m-1}}{d_{m-1}} \right). \end{align*}
        
        The dimensions and transition weights of the sphere satisfy the detailed balance condition $d_m c_m = d_{m+1} b_{m+1}$. Applying this symmetry, we substitute $c_m/d_{m+1} = b_{m+1}/d_m$ and $b_m/d_{m-1} = c_{m-1}/d_m$ to obtain:
        \[ c_1(h) = \frac{n}{d_m} \left( b_{m+1} a_{m+1} + c_{m-1} a_{m-1} \right). \]
        
        Enforcing the variational bound $c_1(h) \leq a_1$ yields the desired inequality:
        \[ c_{m-1} a_{m-1} + b_{m+1} a_{m+1} \leq a_1 \frac{d_m}{n}. \]
        Since $m \geq 0$ was chosen arbitrarily, the bound holds for all $k \geq 0$.
    \end{proof}
    \begin{remark}
        The presence of the dimension ratio $d_k/n$ on the right side of the inequality is mathematically profound. Because $d_k$ grows as $\mathcal{O}(k^{n-2})$, the structural bound is comically mild. It acts merely as a local anti-spiking condition at low frequencies and becomes practically vacuous at high frequencies. It is far too weak to independently force the sequence to taper or truncate.

        Despite the sheer leniency of this structural bound, the algebraic geometry of the sphere enforces a much stricter reality for simple thresholding sets. As established in \cite{damase2026complete}, when the thresholding set $K$ is finite, the optimal positive definite function { can be chosen} to be a polynomial. 
        { Additionally, if we assume that $K$ is symmetric, then the optimizer is guaranteed to be a polynomial!}
    \end{remark}

    \begin{remark}
        The analogous bound for the classical Delsarte method
        is $a_k\leq d_k a_0.$ In $\mathcal{H}_k\oplus \mathcal{H}_0,$ one can have at most $d_k$
        vectors that are orthogonal with positive projection onto $\mathcal{H}_0.$ Whether the bound is attained or a Delsarte hallucination \cite{damase2026complete} is a difficult geometric problem. Given that analogy, the ratio ${d_k}/{n}$ in Theorem \ref{lem:difference_bound} measures the relative sizes of $\mathcal{H}_k$ and $\mathcal{H}_1.$
    \end{remark}

\section{Explicit faithfulness bounds and the two-point gap}
    We now discuss thresholding for particular choices of $K.$ { Recall the quantity $\tau_{K,n}$ defined prior to Theorem \ref{degrade}.}
    \subsection{One Point Thresholding}
    \begin{theorem}
        Let $K=\{\varepsilon\}$ with $\varepsilon > 0$ small. Then:
        $$\tau_{K,n} \geq \frac{|\Gegen{2}(\varepsilon)|}{\varepsilon + |\Gegen{2}(\varepsilon)|}.$$
        Specifically, $\tau_{K,n} \to 1$ as $\varepsilon \to 0$.
    \end{theorem}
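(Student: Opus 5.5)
The bound comes from an explicit two-term polynomial. I would take a convex combination $f(t) = a_1 t + a_2 \Gegen{2}(t)$ with $a_1, a_2 \geq 0$ and $a_1 + a_2 = 1$. By Schoenberg's theorem $f$ is positive definite on $S^{n-1}$, and it satisfies $f(1) = \Gegen{1}(1)+\dots = a_1 + a_2 = 1$, so it is unital. It then only remains to force $f(\varepsilon) = 0$, that is,
\[ a_1 \varepsilon + a_2 \Gegen{2}(\varepsilon) = 0 . \]

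First I check the sign of $\Gegen{2}(\varepsilon)$. From the three-term recurrence with $k=1$ we get $t\cdot t = c_1 \Gegen{2}(t) + b_1$, so
\[ \Gegen{2}(t) = \frac{t^2 - b_1}{c_1} = \frac{n t^2 - 1}{n-1}, \]
using $b_1 = 1/n$ and $c_1 = (n-1)/n$. Hence for $0 < \varepsilon < 1/\sqrt{n}$ we have $\Gegen{2}(\varepsilon) < 0$; this is what "$\varepsilon$ small" means here. With this sign the vanishing condition becomes $a_1 \varepsilon = a_2 |\Gegen{2}(\varepsilon)|$. Combining it with $a_1 + a_2 = 1$ gives
\[ a_1 = \frac{|\Gegen{2}(\varepsilon)|}{\varepsilon + |\Gegen{2}(\varepsilon)|}, \qquad a_2 = \frac{\varepsilon}{\varepsilon + |\Gegen{2}(\varepsilon)|}, \]
and both are nonnegative. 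The function $f$ is therefore admissible for $K = \{\varepsilon\}$. Since $\tau_{K,n}$ is the maximum linear coefficient over admissible functions, $\tau_{K,n} \geq a_1$, which is the stated bound.

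For the limit, as $\varepsilon \to 0$ we have $|\Gegen{2}(\varepsilon)| \to 1/(n-1) > 0$, while the $\varepsilon$ in the denominator tends to $0$. So the lower bound tends to $1$. Since trivially $\tau_{K,n} \leq 1$ (because $\sum a_k = 1$ with $a_k \geq 0$), we conclude $\tau_{K,n} \to 1$.

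The only delicate point is the sign of $\Gegen{2}(\varepsilon)$, which I settle with the explicit formula above. Everything else is direct.
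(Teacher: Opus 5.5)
Your proof is correct and follows the paper's argument: both use the two-term witness $f = a_1\Gegen{1} + (1-a_1)\Gegen{2}$ and solve $f(\varepsilon)=0$ for $a_1$. Your explicit formula $\Gegen{2}(t) = (nt^2-1)/(n-1)$ and the observation $\tau_{K,n}\le 1$ make precise what the paper leaves implicit, namely the admissible range $0<\varepsilon<1/\sqrt{n}$ and the limiting statement $\tau_{K,n}\to 1$.
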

    \begin{proof}
        We construct a function using only the first and second normalized Gegenbauer polynomials: $f(t) = a_1 \Gegen{1}(t) + a_2 \Gegen{2}(t)$.
        To be a valid correlation function, we require $f(1) = a_1 + a_2 = 1$, which implies $a_2 = 1 - a_1$.
        The condition $f(\varepsilon) = 0$ yields:
        $$ a_1 \varepsilon + (1-a_1) \Gegen{2}(\varepsilon) = 0 $$
        Solving for $a_1$:
        $$ a_1 (\varepsilon - \Gegen{2}(\varepsilon)) = -\Gegen{2}(\varepsilon) \implies a_1 = \frac{-\Gegen{2}(\varepsilon)}{\varepsilon - \Gegen{2}(\varepsilon)} $$
        Since $\Gegen{2}(0) = -1/(n-1) < 0$, for sufficiently small $\varepsilon$, $\Gegen{2}(\varepsilon)$ is negative, yielding a valid $a_1 \in (0,1)$.
    \end{proof}

    \begin{figure}
        \centering
        \includegraphics[width=0.75\linewidth]{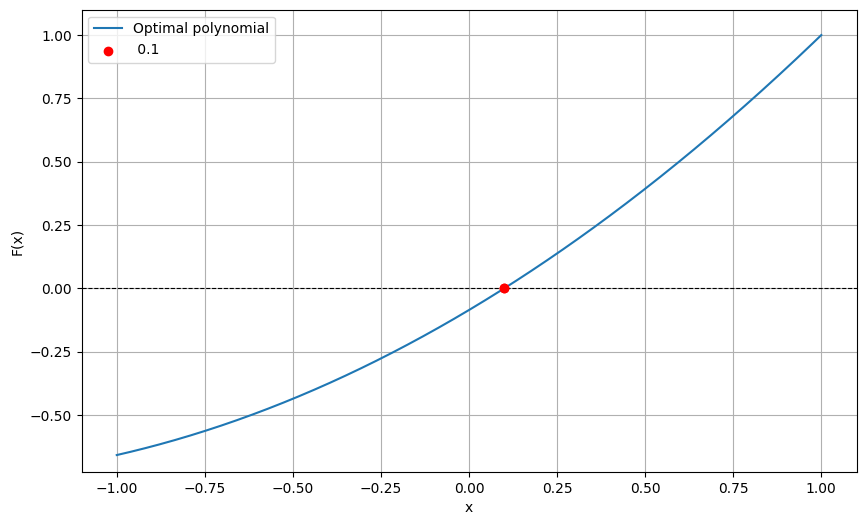}
        \caption{The thresholding function for $K = \{0.1\}, n=3$}
        \label{fig:placeholder}
    \end{figure}

    \subsection{Two Point Thresholding}
    The penalty for thresholding for more than one point is in general severe.
            \begin{figure}
        \centering
        \includegraphics[width=0.75\linewidth]{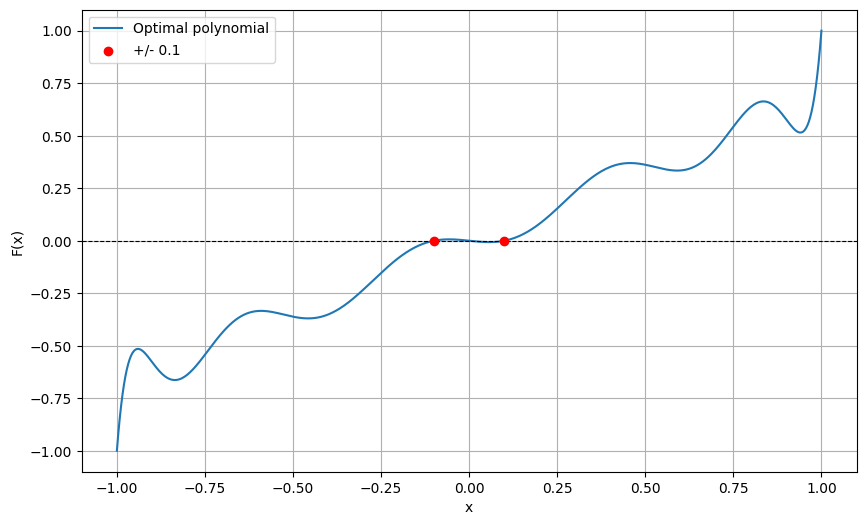}
        \caption{The thresholding function for $K = \{\pm 0.1\}, n=3$}
        \label{fig:placeholder}
    \end{figure}
    \begin{theorem} \label{twopt}
        Let $K=\left\{- \varepsilon,\varepsilon\right\}$ with $1>\varepsilon >0$. Then,
        $$\tau_{K,n} = \frac{\Sigma}{\varepsilon + \Sigma}, \quad \text{where } \Sigma = \sup_{k} \Gegen{2k-1}(-\varepsilon). $$
    \end{theorem}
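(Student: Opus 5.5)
The plan is to use the parity of the normalized Gegenbauer polynomials, $\Gegen{k}(-t) = (-1)^k \Gegen{k}(t)$, to split the two vanishing conditions into separate conditions on the even and odd parts of $f$. Write $f = f_e + f_o$, where $f_e = \sum_{k \text{ even}} a_k \Gegen{k}$ and $f_o = \sum_{k \text{ odd}} a_k \Gegen{k}$, with all $a_k \geq 0$ by Schoenberg's theorem and $\sum_k a_k = f(1) = 1$. Adding and subtracting the two equations shows that $f(\varepsilon) = f(-\varepsilon) = 0$ is equivalent to
\[ f_e(\varepsilon) = 0 \quad\text{and}\quad f_o(\varepsilon) = 0 . \]
These two constraints are decoupled; they interact only through the normalization $\sum_k a_k = 1$.

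\textbf{Upper bound.} Set $s = \sum_{k \text{ odd}} a_k \leq 1$. Using $\Gegen{1}(t) = t$ and parity, the odd condition reads
\[ a_1 \varepsilon = \sum_{j \geq 2} a_{2j-1} \Gegen{2j-1}(-\varepsilon) \leq (s - a_1)\,\Sigma', \qquad \Sigma' = \sup_{j \geq 2} \Gegen{2j-1}(-\varepsilon). \]
The $j=1$ term contributes $\Gegen{1}(-\varepsilon) = -\varepsilon < 0$. So if $\Sigma > 0$, then $\Sigma = \Sigma'$. If instead $\Sigma \leq 0$, the display forces $a_1 = 0$, which is consistent with the formula read as $\tau_{K,n} = 0$; I would treat this as a degenerate case and remark on it. Rearranging gives
\[ a_1 \leq \frac{s\,\Sigma}{\varepsilon + \Sigma} \leq \frac{\Sigma}{\varepsilon + \Sigma}. \]
In words, putting mass on even-degree terms only wastes the budget, so the even part can be dropped.

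\textbf{Lower bound (construction).} Choose an index $j \geq 2$ with $\Gegen{2j-1}(-\varepsilon)$ equal to, or close to, $\Sigma$, and take
\[ f = a_1 \Gegen{1} + (1 - a_1)\,\Gegen{2j-1}, \qquad a_1 = \frac{\Gegen{2j-1}(-\varepsilon)}{\varepsilon + \Gegen{2j-1}(-\varepsilon)} . \]
This polynomial is odd, so $f(-\varepsilon) = -f(\varepsilon)$, and the choice of $a_1$ makes $f(-\varepsilon) = 0$, hence also $f(\varepsilon) = 0$. It is positive definite and unital, exactly as in the one-point construction. Taking $j$ so that $\Gegen{2j-1}(-\varepsilon) \to \Sigma$ shows $\tau_{K,n} \geq \Sigma/(\varepsilon + \Sigma)$, since $x \mapsto x/(\varepsilon + x)$ is increasing.

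\textbf{Main obstacle: attainment of the supremum.} I expect the delicate step to be showing that $\Sigma$ is actually attained, so that the defining maximum of $\tau_{K,n}$ is achieved.
\begin{itemize}
\item For $n \geq 3$, the Darboux estimate gives $\Gegen{k}(-\varepsilon) \to 0$. Hence any positive supremum is attained at a finite index, and the construction above is an honest maximizer.
\item For $n = 2$ (Chebyshev, $\Gegen{k}(\cos\theta) = \cos k\theta$), the supremum may only be approached. I would then either interpret $\tau_{K,n}$ as a supremum or pass to a limit of the functions $f$. The limit is still continuous and positive definite, since the coefficient sequences lie in a compact set of probability vectors, and it still vanishes on $K$.
\end{itemize}
I would also verify $\Sigma > 0$ for small $\varepsilon$, so the statement is nondegenerate. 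For instance, $\Gegen{3}(t)$ is proportional to $t^3 - \tfrac{3}{n+2}\,t$, so it has negative slope at $0$ and $\Gegen{3}(-\varepsilon) > 0$ for small $\varepsilon$.
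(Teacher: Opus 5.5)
Your core argument is correct and matches the paper's. The paper's ``without loss of generality $f$ is odd'' is exactly your parity decoupling; you also write out the resulting bound $a_1\le s\Sigma/(\varepsilon+\Sigma)$, which the paper leaves implicit. Your two-term witness $a_1\Gegen{1}+(1-a_1)\Gegen{2j-1}$ is the paper's function.

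The step that fails is your second option for $n=2$: passing to a limit of near-extremal functions. The coefficient vectors $a_1^{(j)}e_1+(1-a_1^{(j)})e_{2j-1}$ converge only coordinatewise, to $\frac{\Sigma}{\varepsilon+\Sigma}e_1$. The remaining mass escapes to infinity, because probability vectors on $\mathbb{N}$ are not compact in $\ell^1$ and the oscillating $\Gegen{2j-1}$ have no uniformly convergent subsequence. The coefficientwise limit is therefore $\frac{\Sigma}{\varepsilon+\Sigma}\,t$, which is neither unital nor zero at $\pm\varepsilon$.

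In fact no maximizer exists when $\theta=\arccos(-\varepsilon)$ is an irrational multiple of $\pi$. Then $\Sigma=1$ is not attained. Taking $a_1=\frac{1}{1+\varepsilon}$ would turn the odd constraint into $\sum_{j\ge2}a_{2j-1}\cos((2j-1)\theta)=\frac{\varepsilon}{1+\varepsilon}\ge\sum_{j\ge2}a_{2j-1}$. This is impossible, since the left side is positive and every cosine is strictly below $1$. So for $n=2$ the theorem holds only with $\tau_{K,n}$ read as a supremum, which is your first option. The paper's ``taking $k$ to be the maximizer'' tacitly assumes attainment; your Darboux argument correctly supplies this for $n\ge 3$.

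A smaller point: your $\Gegen{3}$ check gives $\Sigma>0$ only for $\varepsilon^2<3/(n+2)$. To cover all $\varepsilon\in(0,1)$, use the same asymptotics. Along odd $k$ the Darboux phase advances by $2\theta\not\equiv 0\pmod{2\pi}$, so infinitely many odd $k$ have $\Gegen{k}(-\varepsilon)>0$. Hence the degenerate branch you set aside never occurs.
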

    \begin{proof}
        Consider $f(t) = \sum a_k \Gegen{k}(t)$ be a positive definite function such that $f(x) = 0$ when $x \in K.$ 
        So, $f(\pm \varepsilon)=0.$ Without loss of generality, we may take $f$ odd.
        Taking $k$ to be the maximizer for $\Gegen{2k-1}(-\varepsilon)$ and choosing
        $$f(x) = \frac{\Gegen{2k-1}(-\varepsilon) x + \varepsilon \Gegen{2k-1}(x)}{\varepsilon+\Gegen{2k-1}(-\varepsilon)}$$
        gives the appropriate $f$ to witness faithfulness.
    \end{proof}
    If $n=2,3$ our bound approaches $1$ as $\varepsilon$ goes to $0.$ If $n \geq 4$ one obtains $\frac{3}{n+2}.$
    (For $n$ large enough, the optimal function is approximately $x^3$ for small $\varepsilon.$)

    The results above highlight a sharp dichotomy. Thresholding a single value near zero allows $a_1 \approx 1$. However, thresholding an interval (or even just two points $\pm \varepsilon$) forces $a_1$ to be small (bounded by a factor roughly proportional to $1/n$).

    \subsection{Interval thresholding}
    \begin{theorem}\label{intthm}
        Let $K=[- \varepsilon,\varepsilon]$ with $1>\varepsilon>0.$ Then,
        $$ \lim_{\varepsilon \to 0} \tau_{K,n} \leq \frac{\Sigma}{1 + \Sigma}, \quad \text{where } \Sigma = \sup_{k \text{ odd}, k \geq 3} \left| \frac{d}{dt}\Gegen{k}(0) \right|. $$
    \end{theorem}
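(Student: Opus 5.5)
The plan is to pass to the odd part of an admissible function and turn the vanishing on $[-\varepsilon,\varepsilon]$ into a linear constraint that pits $a_1$ against the higher odd coefficients. Let $f(t)=\sum_k a_k\Gegen{k}(t)$ be unital, positive definite on $S^{n-1}$, with $f\equiv 0$ on $K=[-\varepsilon,\varepsilon]$.

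First I note that $\tau_{K,n}$ is nonincreasing in $\varepsilon$, since enlarging $K$ shrinks the admissible class. Hence the limit exists, and it suffices to bound $\tau_{K,n}$ by a quantity $\Sigma_\varepsilon/(1+\Sigma_\varepsilon)$ with $\Sigma_\varepsilon\to\Sigma$. If $\Sigma=\infty$ the statement is trivial, since $\tau_{K,n}\le 1$. So I assume $\Sigma<\infty$.

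\textbf{Step 1: a derivative-free constraint.} For $0<h\le\varepsilon$ we have $f(h)-f(-h)=0$. The Gegenbauer series converges absolutely and uniformly because $\sum a_k=1$ and $|\Gegen{k}|\le 1$. Since $\Gegen{k}$ has the parity of $k$, the even terms cancel, and dividing by $2h$ gives the exact identity
\[ a_1=-\sum_{k\text{ odd},\,k\ge 3} a_k\,\frac{\Gegen{k}(h)}{h}. \]
Working with this difference quotient avoids differentiating a merely continuous $f$ termwise. By the mean value theorem, $\Gegen{k}(h)/h=(\Gegen{k})'(\xi_k)$ for some $|\xi_k|\le h\le\varepsilon$, using $\Gegen{k}(0)=0$ for odd $k$. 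Setting
\[ \Sigma_\varepsilon=\sup_{k\text{ odd},\,k\ge3}\ \sup_{|s|\le\varepsilon}|(\Gegen{k})'(s)|, \]
I obtain
\[ a_1\le \Sigma_\varepsilon\sum_{k\ge 2}a_k=\Sigma_\varepsilon(1-a_1), \]
that is, $a_1\le\Sigma_\varepsilon/(1+\Sigma_\varepsilon)$. Taking the supremum over admissible $f$ gives $\tau_{K,n}\le \Sigma_\varepsilon/(1+\Sigma_\varepsilon)$.

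\textbf{Step 2: showing $\Sigma_\varepsilon\to\Sigma$ as $\varepsilon\to0$.} This is the main obstacle, because the supremum is over infinitely many $k$ and continuity must be uniform in $k$.
\begin{itemize}
\item For $n\ge5$, Lemma \ref{lem:derivative} applied on the compact set $[-1/2,1/2]$ gives $\sup_{|s|\le 1/2}|(\Gegen{k})'(s)|=\mathcal{O}(k^{(4-n)/2})\to0$. Given $\eta>0$, I fix $k_0$ so that the tail $k>k_0$ contributes at most $\eta$ uniformly in $\varepsilon\le 1/2$. The finitely many remaining polynomials are continuous at $0$, so $\limsup_{\varepsilon\to0}\Sigma_\varepsilon\le\max(\Sigma,\eta)$.
\item For $n\le 4$ the derivatives at $0$ do not decay; by Lemma \ref{lem:derivative}, for $n=4$ they are bounded, and for $n=2,3$ they grow. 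In these cases I expect $\Sigma=\infty$, or else a supremum that is not attained. Here I would first try to show $\Sigma=\infty$ directly from the explicit formula $(\Gegen{k})'(0)=\frac{k(k+n-2)}{n-1}\tilde C^{(\alpha+1)}_{k-1}(0)$ together with the explicit value of Gegenbauer polynomials at $0$. If $n=4$ gives a finite $\Sigma$ that is only a limit, I would instead control the error $|(\Gegen{k})'(s)-(\Gegen{k})'(0)|\le \varepsilon\sup|(\Gegen{k})''|$ on the bounded range of $k$, and use the Darboux asymptotics to show that the tail's supremum converges to its value at $s=0$ uniformly.
\end{itemize}

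Letting $\varepsilon\to0$ in the bound of Step 1 and using the monotonicity of $x\mapsto x/(1+x)$ then yields $\lim_{\varepsilon\to 0}\tau_{K,n}\le \Sigma/(1+\Sigma)$.
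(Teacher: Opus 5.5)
Your Step~1 is correct: the difference-quotient identity and the mean value theorem give $\tau_{K,n}\le\Sigma_\varepsilon/(1+\Sigma_\varepsilon)$. Your Step~2 is rigorous for $n\ge5$. The gap is the case $n=4$. There you only offer a hedged plan, based on the expectation that $\Sigma=\infty$ or that the supremum is not attained, and neither holds. For $n=4$ one has $\Gegen{k}(\cos\theta)=\sin((k+1)\theta)/((k+1)\sin\theta)$, hence
\[ (\Gegen{k})'(\cos\theta)=-\frac{\cos((k+1)\theta)}{\sin^2\theta}+\frac{\sin((k+1)\theta)\cos\theta}{(k+1)\sin^3\theta}. \]
So $|(\Gegen{k})'(0)|=1$ for every odd $k$, $\Sigma=1$, and the theorem asserts the nontrivial bound $1/2$.

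In this case the derivatives do not decay, and they oscillate at frequency $k$, so the family $\{(\Gegen{k})'\}$ is not equicontinuous at $0$. The convergence $\Sigma_\varepsilon\to\Sigma$ therefore cannot follow from the $\mathcal{O}(1)$ bound of Lemma~\ref{lem:derivative} alone. It needs the exact amplitude $(\sin\theta)^{-2}$: either from the display above, which gives $\Sigma_\varepsilon\le(1-\varepsilon^2)^{-1}+\tfrac14\varepsilon(1-\varepsilon^2)^{-3/2}\to1$, or from the Darboux leading term with an error uniform in $\theta$. Your sketch points that way but does not carry it out, so as written $n=4$ is unproved. For $n=2,3$, showing $\Sigma=\infty$ is a routine check, e.g.\ $|T_{2m+1}'(0)|=2m+1$ and $|P_{2m+1}'(0)|=(2m+1)\binom{2m}{m}4^{-m}\to\infty$; the paper only asserts this too.

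The difficulty comes from taking the supremum over $k$ and over $|s|\le\varepsilon$ before letting $h\to0$. The paper instead differentiates the Gegenbauer series termwise at $0$. That only needs the uniform-in-$k$ bound $\sup_{|s|\le 1/2}|(\Gegen{k})'(s)|\le M$, which Lemma~\ref{lem:derivative} supplies for every $n\ge4$.

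In your own notation: fix $f$ and let $h\to0$ in your exact identity $a_1=-\sum_{k\text{ odd},\,k\ge3}a_k\Gegen{k}(h)/h$. By the mean value theorem $|\Gegen{k}(h)/h|\le M$ for $0<h\le 1/2$, and $\sum a_k=1$. Dominated convergence then gives $a_1=-\sum_{k\text{ odd},\,k\ge3}a_k(\Gegen{k})'(0)\le\Sigma(1-a_1)$.

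This treats $n=4$ and $n\ge5$ together and makes $\Sigma_\varepsilon$ unnecessary. It also shows $\tau_{K,n}\le\Sigma/(1+\Sigma)$ for each fixed $\varepsilon$, not just in the limit.
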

    \begin{proof}
        If $n=2,3$ the supremum is infinite and thus the claim is trivial. We will assume $n\geq 4.$
        Let $f(t) = \sum a_k \Gegen{k}(t)$ be a positive definite function such that $f(x) = 0$ when $x \in K.$ Note $f'(0)=0.$
        Also, since $f(0)$ must vanish, we assume $f$ is odd. Differentiating the expansion term-wise:
        $$ f'(0) = \sum_{k=0}^\infty a_k \frac{d}{dt}\Gegen{k}(0) = 0. $$
        (Differentiation under the sum is valid as Darboux estimates imply the derivative sum converges uniformly to a continuous function
        on $K.$)
        We isolate the linear term $k=1$, noting that $\frac{d}{dt}\Gegen{1}(0) = 1$:
        $$ a_1 \cdot 1 + \sum_{k \neq 1} a_k \frac{d}{dt}\Gegen{k}(0) = 0. $$
        Since $\Gegen{k}$ are even for even $k$, their derivatives at 0 vanish. The sum restricts to odd $k \geq 3$.
        $$ a_1 = - \sum_{k \text{ odd}, k \geq 3} a_k \frac{d}{dt}\Gegen{k}(0) = \sum_{k \text{ odd}, k \geq 3} a_k \left| \frac{d}{dt}\Gegen{k}(0) \right|. $$
        We maximize $a_1$ subject to $\sum a_k = 1$. Let $S_k = |\frac{d}{dt}\Gegen{k}(0)|$. Then $a_1 = \sum_{k \geq 3} a_k S_k$.
        Using $\sum_{k \geq 3} a_k = 1 - a_1$, we have $a_1 \leq (1-a_1) \sup S_k$.
        Rearranging yields $a_1 \leq \frac{\sup S_k}{1 + \sup S_k}$.
    \end{proof}
    One may also see the above as a corollary of Theorem \ref{twopt} via L'Hôpital's rule.

    %
%

  \section{Conjectures and remarks}
    The following conjecture was made in \cite{damase2026complete} in terms of the language of Class I representations of homogeneous spaces.
    \begin{conjecture}
                Any positive definite function on $S^{n-1}$ is a pointwise limit of continuous ones.
    \end{conjecture}
    One may, of course, run Delsarte's method or our linear Delsarte method for soft thresholding
    through a discontinous positive definite function just as easily as a continuous one.
    We have the following paradox: supposing the conjecture were false, we would likely obtain
    bounds sharper than those attained by Delsarte's method for particular finite sets $K.$
    Dixmier shows that any positive definite function which is measurable is a pointwise limit of continuous ones by a convolution argument \cite{dixmier},
    and thus any such pathological function is inherently nonmeasurable, and perhaps axiom dependent.
    That is, for $K$ with some particular constellation of entries one sort of expects large unexplained drops in
    the Delsarte type bounds arising from discontinuous representations.

\subsection{Clustering and Sparsity}
       In light of Theorems \ref{intthm} and \ref{twopt}, we see that soft thresholding a set with two or more points must sharply dampen the off diagonal entries of covariance matrices. That is, the payment for enforcing a prior that random variables are consistent in a geometrically unbiased way (a kernel embedding arising from a positive definite function) is extortionate.
       
       In the statistical literature, thresholding estimators are typically justified by assuming the true population matrix is highly sparse or banded \cite{bickel2008covariance}. Our results provide a rigorous geometric justification for why such assumptions are unavoidable: without an inherent clustered or sparse structure, preserving positive definiteness under thresholding violently collapses the signal.

    However, embeddings of distributions (e.g. $\sum_{x} \omega_x\iota(x)$ where $\omega_x \geq 0$ and $1=\sum_{x}\omega_x$)
    may have meaningfully large correlations.
    Such is a natural, as our methods are naturally suited to low sample high feature data, (where the covariance matrix is low rank) which inherently requires clustering, choosing representative features (e.g. LASSO type methods) or other treatment to be seriously rigorous.
    Finding appropriately meaningful distributions seems to be some kind of clustering
    type problem.

    \subsection*{Acknowledgements}

We thank Apoorva Khare for numerous comments and suggestions.

    \bibliographystyle{plain}
    \bibliography{references}

\end{document}